\theoremstyle{plain}
\newtheorem{thm}{Theorem}[section]
\newtheorem{prop}[thm]{Proposition}
\newtheorem{lemma}[thm]{Lemma}
\newtheorem{cor}[thm]{Corollary}
\theoremstyle{definition}
\newtheorem{defn}[thm]{Definition}
\newtheorem*{defn*}{Definition}
\newtheorem*{question*}{Question}
\newtheorem{question}{Question}
\newtheorem{example}[thm]{Example}
\newtheorem*{example*}{Example}
\newtheorem{rem}[thm]{Remark}
\newtheorem*{rem*}{Remark}
\newcommand{\field}[1]{\mathbb{#1}}
\newcommand{\N}{\field{N}}
\newcommand{\Z}{\field{Z}}
\newcommand{\Q}{\field{Q}}
\newcommand{\R}{\field{R}}
\newcommand{\ideal}[1]{\mathfrak{#1}}
\newcommand{\m}{\ideal{m}}
\newcommand{\bfx}{\mathbf{x}}
\newcommand{\ra}{\rightarrow}
\newcommand{\be}{\begin{enumerate}}
\newcommand{\ee}{\end{enumerate}}
\newcommand{\li}
 {\leftfootline}
\newcommand{\onto}{\twoheadrightarrow}
\newcommand{\cA}{\mathcal{A}}
\renewcommand{\phi}{\varphi}
\let\int\relax
\DeclareMathOperator{\int}{i}
\author{Neil Epstein}
\address{Department of Mathematical Sciences \\ George Mason University \\ Fairfax, VA  22030}
\email{nepstei2@gmu.edu}
\title{Egyptian integral domains}
\subjclass[2020]{Primary: 13F99, Secondary: 13A02}
\keywords{Egyptian domain, Egyptian fractions, generically Egyptian domain, locally Egyptian domain, affine semigroup ring}
\date{August 11, 2023}
\begin{document}
\maketitle
\begin{abstract}
The notion of an \emph{Egyptian domain} (where the analogue of Egyptian fractions works appropriately), first explored by Guerrieri-Loper-Oman, is extended to the more general notions of \emph{generically} and \emph{locally} Egyptian domains.  Results from the previous paper are extended, as well as reinterpreted in the new expanded context.  It is shown that localizations of polynomial rings are typically Egyptian, that positive semigroup-graded domains are \emph{not} Egyptian, and that affine semigroup rings are not Egyptian unless the semigroup is a group.  However, any finitely generated algebra over a field or $\Z$ is shown to be \emph{locally} Egyptian.  It is further shown that if $R$ is a pullback of $S$, then $R$ is Egyptian if and only if $S$ is.
\end{abstract}

\section{Introduction}

In \cite{GLO-Egypt}, Guerrieri, Loper, and Oman transport the study of Egyptian fractions from the realms of mathematical history and number theory to the study of commutative integral domains. They say an element of the fraction field of an integral domain $D$ is \emph{Egyptian} if it can be written as sums of reciprocals of distinct elements of $D$.  They show that it is equivalent to say that the element can be written as sums of reciprocals of not necessarily distinct elements.  A domain $D$ is called \emph{Egyptian} if every nonzero element of $D$ (or equivalently, every nonzero element of its fraction field) is Egyptian.  The result implicitly relied upon by the ancient Egyptians in the 2\textsuperscript{nd} millenium BCE (and proved by Fibonacci in the middle ages) is that rational numbers between $0$ and $1$ are Egyptian.  In \cite{GLO-Egypt}, the authors develop some theory and give several classes of Egyptian (and non-Egyptian) domains.  In particular, they show that any domain with a nontrivial Jacobson radical (e.g. any local domain other than a field) is Egyptian, meaning that \emph{failure} to be Egyptian is in some sense a global property.  They also show that any overring of an Egyptian domain is Egyptian, and any proper overring of a polynomial ring in one variable over a field is Egyptian.  On the other hand, they show that domains of the form $D[X]$ (with $D$ any domain), and certain ultraproducts, are \emph{not} Egyptian.  More is also done in this paper rich of exposition, results, and an invitation to the interested reader to join the study of Egyptian domains.

In the current paper, I accept their invitation, in that I generalize some of the results in \cite{GLO-Egypt} (and in the process I provide further examples and non-examples of Egyptianness) and introduce the novel concepts 
of \emph{locally} and \emph{generically Egyptian} domains.  I show that a finitely generated $k$-algebra (or $\Z$-algebra) domain is always locally Egyptian, but rarely Egyptian.  I also show that the properties of Egyptian and generically Egyptian descend in pullbacks, which has consequences for integral extensions within a fraction field.  Like \cite{GLO-Egypt}, the current paper closes with some interesting questions.

Let us collect together here most of the main results in \cite{GLO-Egypt} so that we may refer to them later.

\begin{prop}[{\cite[Proposition 1]{GLO-Egypt}}]\label{pr:polynot} For any integral domain $D$, the polynomial ring $D[X]$ is not Egyptian.  Even the element $X$ is not Egyptian.
\end{prop}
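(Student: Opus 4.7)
The plan is to derive a contradiction via a simple degree count. I would assume, toward a contradiction, that $X$ is Egyptian in $D[X]$. Since Guerrieri--Loper--Oman show that distinctness of the denominators is unnecessary, this means we can write
\[
X \;=\; \sum_{i=1}^n \frac{1}{f_i}
\]
for some nonzero $f_i \in D[X]$. Clearing denominators in $D[X]$ produces the polynomial identity
\[
X \cdot \prod_{i=1}^n f_i \;=\; \sum_{i=1}^n \prod_{j \neq i} f_j.
\]

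The next step is to invoke that $D$, hence $D[X]$, is an integral domain, so degrees add under multiplication (leading coefficients of nonzero polynomials multiply to nonzero leading coefficients). The left-hand side then has degree exactly $1 + \sum_i \deg f_i$. Meanwhile, the $i$-th summand on the right has degree $\sum_j \deg f_j - \deg f_i$, so the entire right-hand side has degree at most $\sum_j \deg f_j - \min_i \deg f_i$, which is itself $\leq \sum_j \deg f_j$ because every $\deg f_i \geq 0$. Comparing the two sides yields $1 + \sum_i \deg f_i \leq \sum_i \deg f_i$, a contradiction. The second claim of the proposition follows at once, since a domain cannot be Egyptian if even one of its nonzero elements fails to be.

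I do not anticipate a real obstacle. The only subtlety worth noting is that the degree of a sum of polynomials can be strictly less than the maximum of the degrees of its summands, but here I only need the stated upper bound on the right-hand side, so any such cancellation only reinforces the contradiction. Accordingly, the argument reduces to essentially a one-line degree computation once denominators are cleared.
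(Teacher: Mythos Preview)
Your argument is correct. The paper does not prove this proposition directly but cites it from \cite{GLO-Egypt}; however, the paper's own proof of the generalization in Proposition~\ref{pr:nonneg-graded} is precisely your degree-count argument (clear denominators, compare the degree $d + \sum d_i$ on the left against the bound $\sum d_i$ on the right), so your approach matches the paper's method exactly.
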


\begin{thm}\label{thm:prevEgyptian}
If an integral domain $D$ satisfies any of the following conditions, then it is Egyptian.
\begin{enumerate}
\item\label{it:field} $D$ is a field. \cite[Example 1]{GLO-Egypt}
\item\label{it:Z} $D=\mathbb Z$. {\cite[Theorem 1]{GLO-Egypt}}\footnote{The authors of \cite{GLO-Egypt} attribute this result to Fibonacci's 1202 book \textit{Liber Abaci}, cited from its translation to English in \cite{DuGr-FibE}, though they provide their own proof.  However, reading Fibonacci's work from that source, it is clear that the latter only proved that \emph{proper} fractions are Egyptian.  On the other hand, work in the mid-20th century~\cite{VanVan-rbaseZ} shows that one can represent any positive integer as the sum of reciprocals from an arbitrary arithmetic sequence.  Coupled with Fibonacci's actual result, (\ref{it:Z}) follows.}
\item\label{it:nondistinct} Every nonzero $d\in D$ can be expressed as a sum of (not necessarily distinct) unit fractions. $($from \cite[Theorem 2]{GLO-Egypt}$)$
\item\label{it:overpoly} $D$ is a proper overring of $F[X]$, where $F$ is a field. \cite[Proposition 7]{GLO-Egypt}
\item\label{it:overring} $D$ is an overring of an Egyptian domain. \cite[Proposition 4]{GLO-Egypt}
\item\label{it:alg} $D$ is algebraic over an Egyptian domain. \cite[Proposition 5]{GLO-Egypt}
\item\label{it:Groupring} $D = A[G]$, where $A$ is an Egyptian domain and $G$ is a torsion-free abelian group. \cite[Proposition 3]{GLO-Egypt}
\item\label{it:Jac} The Jacobson radical of $D$ is nonzero. \cite[Example 3]{GLO-Egypt}
\end{enumerate}
\end{thm}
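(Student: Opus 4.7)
The statement is a compilation of eight separate results from \cite{GLO-Egypt}, each already individually cited within the theorem, so the ``proof'' is simply to invoke those references in turn. The overall approach is to verify item by item that the quoted claims match the sources, reorganizing the hypotheses only cosmetically. Nonetheless, it is useful to understand the flavor of each underlying argument, since several are invoked repeatedly in the sequel.

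Items (1) and (5) are essentially formal. In a field, any nonzero $d$ is the single-term Egyptian sum $1/d^{-1}$ with $d^{-1}$ in $D$. For (5), if $D \subseteq D'$ share a fraction field, then any representation $\sum 1/a_i$ with $a_i \in D$ is \emph{a fortiori} a sum of reciprocals of elements of $D'$, so Egyptianness passes up. Item (3) rests on the identity $\frac{1}{a} = \frac{1}{a+1} + \frac{1}{a(a+1)}$, applied iteratively to convert any representation with possibly repeated denominators into one with distinct denominators. Item (2) combines Fibonacci's greedy algorithm (for proper fractions) with the theorem of \cite{VanVan-rbaseZ} on representing positive integers by reciprocals from arithmetic progressions, as the footnote explains. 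Item (8) is where the presence of a nontrivial Jacobson radical $J(D)$ is exploited: for any $0 \neq j \in J(D)$ and any $d \in D$ the element $1 + jd$ is a unit of $D$, which provides enough flexibility to build Egyptian expressions for arbitrary targets.

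The more substantive items are (4), (6), and (7). Item (4) reduces to (8) by verifying that a proper overring of $F[X]$ has nonzero Jacobson radical. Item (6) exploits an integral dependence relation $d^n + a_{n-1}d^{n-1} + \cdots + a_0 = 0$ to express elements of the algebraic extension in terms of the base ring, then invokes the base's Egyptian property to finish. Item (7) constructs Egyptian expressions in the group ring $A[G]$ componentwise, using torsion-freeness of $G$ to control cancellation. The main obstacle, common to (6) and (7), is the delicate combinatorics of decomposing an arbitrary fraction-field element into a genuine finite sum of unit fractions \emph{within the ring itself}; the cited paper develops the requisite toolkit, and the plan here is to invoke those results rather than rederive them.
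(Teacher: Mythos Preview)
The paper gives no proof of this theorem at all; it is purely a catalogue of results from \cite{GLO-Egypt}, with the citation attached to each item. Your proposal correctly identifies this, so as a ``proof'' your first paragraph already matches the paper exactly.

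Your supplementary sketches are mostly reasonable as intuition, but one is wrong and worth flagging since you may rely on it later. You claim that item (4) reduces to item (8) because a proper overring of $F[X]$ has nonzero Jacobson radical. This is false: take $F$ infinite and $D = F[X, X^{-1}]$. This is a proper overring of $F[X]$, yet its maximal ideals are the $(X-a)$ with $a \in F^\times$ (in the algebraically closed case; in general, the maximal ideals of $F[X]$ not containing $X$), and a Laurent polynomial vanishing at infinitely many points is zero, so the Jacobson radical is trivial. The actual argument for (4) in \cite{GLO-Egypt} is closer in spirit to the proof of Proposition~\ref{pr:polyloc} in this paper: one produces an explicit Egyptian representation of $X$ using whatever nonconstant element has been inverted, and the rest follows. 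Your sketches of (6) and (7) are also somewhat loose (for (6), ``algebraic'' is not the same as ``integral,'' though the paper later remarks that the \cite{GLO-Egypt} argument does pivot on a monic relation), but since you are only invoking the citations these do not affect correctness.
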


\section{Semigroup algebras and their localizations}
In this section, I generalize both the theorem that proper overrings of $F[X]$ ($F$ a field) are Egyptian, and also the theorem that rings of the form $A[X]$ ($A$ any domain) are \emph{not} Egyptian.  Using some convexity theory, one then obtains a characterization of which affine semigroup rings are Egyptian.
\begin{prop}\label{pr:polyloc}
    Let $D$ be an Egyptian domain. Let $W$ be a multiplicative subset of $D[X]$ such that $W \nsubseteq D$.  Then $R=D[X]_W$ is Egyptian.
\end{prop}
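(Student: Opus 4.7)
The plan is to exhibit an Egyptian subring $A\subseteq R$ over which $R$ is algebraic, and then to invoke Theorem~\ref{thm:prevEgyptian}(\ref{it:alg}). Since $W\nsubseteq D$, I pick some $f\in W$ with $n:=\deg f\geq 1$, so that $1/f\in R$; write $f=a_n X^n+\cdots+a_0$ with $a_n\ne 0$. The candidate subring is $A:=D[f,1/f]\subseteq R$. Comparing leading coefficients in $X$ shows that any nontrivial relation $\sum_i b_i f^i=0$ with $b_i\in D$ would force the top term $b_k a_n^k X^{kn}$ to vanish, so $f$ is transcendental over $D$ and $A$ is $D$-algebra isomorphic to the Laurent polynomial ring $D[Y,Y^{-1}]$.

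The crucial observation is that $D[Y,Y^{-1}]$ is Egyptian whenever $D$ is. By Theorem~\ref{thm:prevEgyptian}(\ref{it:nondistinct}) I need only write each nonzero element as a sum of not-necessarily-distinct unit fractions, and by additivity it is enough to treat a single monomial $cY^i$ with $c\in D\setminus\{0\}$ and $i\in\Z$. Writing $c=\sum_j 1/d_j$ in $D$ (possible since $D$ is Egyptian), I obtain $cY^i=\sum_j Y^i/d_j=\sum_j 1/(d_jY^{-i})$; since $Y$ is a unit in the Laurent polynomial ring, each $d_jY^{-i}$ lies in that ring, yielding a genuine unit-fraction decomposition. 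This gives the Egyptianness of $A$.

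To see that $R$ is algebraic over $A$, note that $X$ is a root of the nonzero polynomial $a_n T^n+a_{n-1}T^{n-1}+\cdots+a_1 T+(a_0-f)\in A[T]$, so $\Frac(D)(X)=\Frac(A)(X)$ is a finite algebraic extension of $\Frac(A)$. Every $r\in R\subseteq\Frac(D)(X)$ is therefore algebraic over $\Frac(A)$, and clearing denominators from a defining polynomial produces a nonzero polynomial over $A$ annihilating $r$. The hypotheses of Theorem~\ref{thm:prevEgyptian}(\ref{it:alg}) are now met, so $R$ is Egyptian.

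The most interesting step is the Laurent polynomial computation in the second paragraph, which is where the Egyptianness of $D$ enters; the rest of the argument is structural. Note that the hypothesis $W\nsubseteq D$ is essential: without it, $R$ would be a polynomial ring over a localization of $D$, which Proposition~\ref{pr:polynot} forbids from being Egyptian.
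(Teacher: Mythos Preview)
Your proof is correct, but it proceeds quite differently from the paper's.  The paper argues directly: it observes that $R$ is generated as a ring by $D\setminus\{0\}$, by $W^{-1}$, and by $X$, and then shows each of these is Egyptian in $R$ (the only nontrivial case being $X$, handled by an explicit manipulation with a positive-degree $g\in W$).  You instead build an Egyptian subring $A=D[f,1/f]\cong D[Y,Y^{-1}]$ and then appeal to Theorem~\ref{thm:prevEgyptian}(\ref{it:alg}) after verifying that $R$ is algebraic over $A$.  Your route is more structural and avoids the case split on whether $g$ has a constant term; the paper's route is more self-contained in that it does not invoke the algebraicity criterion.  One simplification you could make: your second paragraph reproves a special case of Theorem~\ref{thm:prevEgyptian}(\ref{it:Groupring}), since $D[Y,Y^{-1}]=D[\Z]$ with $\Z$ torsion-free abelian; citing that would let you skip the monomial computation entirely.
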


\begin{proof}
By Theorem~\ref{thm:prevEgyptian}(\ref{it:nondistinct}) it is enough to show that each element in a set of generators of $R$ (as a ring) is Egyptian. Accordingly, note that $R$ is generated as a ring by $D\setminus \{0\}$, $W^{-1} = \{\frac 1w \mid w \in W\}$, and the element $X$.  Any element of $D \setminus \{0\}$ is Egyptian in $D$ (and hence in $R$) by assumption.  Any element of $W^{-1}$ is the reciprocal of an element of $D[X]$, hence of $R$, and is thus Egyptian.  As for $X$, choose an element $g \in W$ of positive degree.  If $g$ has no constant term, we have $g=Xh$, so that $X = g/h = \frac 1 {h/g}$, yielding an Egyptian representation of $X$ in $R$ since $h/g \in R \setminus \{0\}$.  Otherwise, let $-d$ be the constant term of $g$.  Then $g+d = Xh$ for some $h \in D[X]$, and since $D$ is Egyptian we have $d = \sum_{i=1}^n \frac 1{c_i}$ for some nonzero elements $c_i$ of $d$.  Thus, \[
X = \frac gh + \sum_{i=1}^n \frac 1 {c_i h} = \frac 1 {h/g} + \sum_{i=1}^n \frac 1 {c_i h},
\]
which shows that $X$ is Egyptian in $R$ since the fraction $h/g$ and the products $c_i h$ all are nonzero elements of $R$.
\end{proof}

\begin{rem}
Since for a field $F$, $F[X]$ is a QR domain, the above proposition is a generalization of Theorem~\ref{thm:prevEgyptian}(\ref{it:overpoly}).  In fact it is a proper generalization, as it follows, for instance, that $\mathbb Z[X, 1/g]$ is Egyptian for any nonconstant polynomial $g \in \mathbb Z[X]$.  One might wish to prove the following more natural-looking generalization: ``Let $D$ be an Egyptian domain, $F$ its fraction field, and let $E$ be a proper overring of $D[X]$ such that $E \cap F = D$. Then $E$ is Egyptian". But that generalization is false.  Let $D = \Z$ and $E = \Z[X/3]$.  Then $E$ is isomorphic to a polynomial ring in one variable over $\Z$, and thus by Proposition~\ref{pr:polynot}, $E$ is not Egyptian.
\end{rem}

For the next result, we need to recall some basics of graded rings.  A nice source for this is \cite[4.A]{BrGu-polybook}.  If $\Gamma$ is a commutative monoid, written additively, then a \emph{$\Gamma$-graded ring} is a ring $R$, along with a collection of additive subgroups $\{R_g \mid g\in \Gamma\}$ of $R$, such that $R = \bigoplus_{g\in \Gamma} R_g$ and such that for any $g,h\in \Gamma$, we have $R_g \cdot R_h \subseteq R_{g+h}$.  For $g\in \Gamma$, an element of $R_g$ is called \emph{homogeneous}, of \emph{degree} g.  We say a $\Gamma$-grading on $R$ is \emph{nontrivial} if $R \neq R_0$.  That is, there is some homogeneous element of nonzero degree.

\begin{prop}\label{pr:nonneg-graded}
Let $G$ be a totally ordered abelian group.  Let $\Gamma$ be the monoid of nonnegative elements (i.e. elements greater than or equal to the identity element $0$) of $G$, and let $D$ be a nontrivially $\Gamma$-graded domain.  Then $D$ is not Egyptian.
\end{prop}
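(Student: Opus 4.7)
The plan is to mimic the classical argument that $F[X]$ is not Egyptian, by manufacturing a degree-like function out of the grading. Since $G$ is totally ordered and each nonzero element of $D$ has only finitely many nonzero homogeneous components, I can define a top-degree map $t\colon D\setminus\{0\}\to \Gamma$ by letting $t(a)$ be the largest $g\in\Gamma$ for which the degree-$g$ component of $a$ is nonzero. The first step is to verify that $t(ab)=t(a)+t(b)$: the top components $a_{t(a)}$ and $b_{t(b)}$ lie in a domain, hence multiply to a nonzero element of $D_{t(a)+t(b)}$, and the total ordering on $G$ prevents any homogeneous component of $ab$ from lying in a strictly larger degree. Subadditivity $t(a+b)\le \max(t(a),t(b))$ (whenever $a+b\ne 0$) is immediate from the definition.

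Next I would extend $t$ to the fraction field $K^\times$ by setting $t(a/b):=t(a)-t(b)$. Well-definedness and multiplicativity follow at once from multiplicativity on $D$, and the bound $t(f+g)\le \max(t(f),t(g))$ persists on $K^\times$ after clearing denominators. An easy induction then gives $t\bigl(\sum_{i=1}^n f_i\bigr)\le \max_i t(f_i)$ for any nonzero sum in $K^\times$.

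Finally, nontriviality of the grading supplies a nonzero homogeneous $x\in D_\gamma$ with $\gamma>0$, so $t(x)=\gamma>0$. Suppose for contradiction that $x=\sum_{i=1}^n \frac{1}{d_i}$ for some nonzero $d_1,\dotsc,d_n\in D$. Since $\Gamma$ consists only of nonnegative elements of $G$, every $t(d_i)\ge 0$, so $t(1/d_i)=-t(d_i)\le 0$. The extended subadditivity then forces $\gamma=t(x)\le \max_i t(1/d_i)\le 0$, which is absurd. Hence $x$ is not Egyptian, and $D$ is not Egyptian. The only step requiring real care is the proof that the top components multiply without cancellation when verifying $t(ab)=t(a)+t(b)$; this is precisely the point where the totally ordered, domain, and graded hypotheses all come together, and after it the contradiction is almost automatic.
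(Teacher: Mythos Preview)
Your proof is correct and follows essentially the same approach as the paper: both define the top-degree function on $D$, use that $D$ is a domain and $G$ totally ordered to get multiplicativity, and then compare degrees of the two sides of a putative Egyptian representation of a homogeneous element of positive degree. The only cosmetic difference is that you extend $t$ to the fraction field and work with $t(1/d_i)\le 0$, whereas the paper clears denominators and compares $d+\sum d_i$ with $\sum d_i$ directly in $D$; these are equivalent formulations of the same idea.
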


\begin{proof}
Let $x \in D$ be a homogeneous element of degree $d >0$.  Suppose $x$ is Egyptian.  Then there exist nonzero $f_1, \ldots, f_n \in D$ such that \[
x = \frac 1{f_1} + \cdots + \frac 1{f_n}.
\]
For each $1\leq i \leq n$, let $d_i$ be the degree of $f_i$ (i.e. the degree of the top degree homogeneous component, or \emph{leading term}, of $f_i$).  From the displayed equation, we can clear denominators to get \[
xf_1 \cdots f_n = \sum_{i=1}^n \prod_{j\neq i} f_j.
\]
But here we reach a contradiction, as the left hand side has degree equal to $d+d_1 + \cdots +d_n$ \cite[Lemma 4.6]{BrGu-polybook}, which is strictly greater than $d_1 + \cdots + d_n$, whereas each summand of the right hand side has degree less than or equal to $d_1 + \cdots + d_n$ (again by \cite[Lemma 4.6]{BrGu-polybook}).
\end{proof}

For the rest of this section, we need some convexity theory. A good reference for this is \cite[Chapter 1]{Vil-monbook}.  If $K$ is a subfield of $\R$, we set $K_{\geq 0} := \{k \in K \mid k\geq 0\}$.  If $\cA$ is a nonempty subset of $K^n$, where $K$ is an ordered subfield of $\R$, and $U$ is a subset of $K$, then $U \cA := \{u_1 a_1 + \cdot + u_t a_t \mid t \in \N$, $u_j \in U$, $a_j \in \cA\}$.  This covers the symbols $\R\cA$, $\Q_{\geq 0} \cA$, and $\R_{\geq 0} \cA$.  Note that $K \cA$ is a $K$-linear subspace of $\R^n$.

We also need the following lemma, which must be well-known, but for lack of a reference will give a proof.  Note that the finitely generated case appears as \cite[Exercise 2.5]{BrGu-polybook}.

\begin{lemma}\label{lem:groupcone}
Let $\Lambda$ be an additive submonoid of $\Q^n$.  Suppose $\R_{\geq 0} \Lambda$ is a group.  Then $\Lambda$ is a group.
\end{lemma}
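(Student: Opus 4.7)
The plan is to show directly that $\Lambda$ is closed under negation. Fix $v \in \Lambda$; I will produce $-v \in \Lambda$. Since $\R_{\geq 0}\Lambda$ is a group by hypothesis, $-v$ already lies in this cone, so I can write $-v = \sum_{j=1}^m r_j w_j$ for some $w_j \in \Lambda$ and some nonnegative reals $r_j$.

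The main obstacle is the passage from real to rational coefficients: I need an expression $-v = \sum_j r_j w_j$ with $r_j \in \Q_{\geq 0}$. The strategy is to invoke Carathéodory's theorem in its conic form to discard redundant generators and assume the remaining $w_j$ are linearly independent in $\R^n$ (with $m \leq n$). The coefficients $r_j$ are then uniquely determined by the linear system $\sum_j r_j w_j = -v$. Since all $w_j$ and $v$ lie in $\Q^n$ and the $w_j$ are also linearly independent over $\Q$, this system has rational coefficients, a rational right-hand side, and a unique real solution, which must therefore be rational. Nonnegativity is preserved, giving $r_j \in \Q_{\geq 0}$.

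Once rationality is in hand, the rest is bookkeeping. Choose $M \in \N_{\geq 1}$ to be a common denominator of the $r_j$, and set $N_j := M r_j \in \N$. Then $-Mv = \sum_j N_j w_j$ is an $\N$-linear combination of elements of $\Lambda$, hence lies in $\Lambda$. Combining this with $(M-1)v \in \Lambda$, which holds because $\Lambda$ is a submonoid containing $v$ and $M - 1 \geq 0$, I conclude
\[
-v = (-Mv) + (M-1)v \in \Lambda,
\]
with the case $v = 0$ being trivial. Since $v$ was arbitrary, $\Lambda$ is a group. The only non-elementary ingredient is the rationality step, which ultimately rests on the standard polyhedral fact that a nonempty rational polyhedron has a rational point.
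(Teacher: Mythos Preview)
Your proof is correct and follows the same overall skeleton as the paper's: write $-v$ as a nonnegative real combination of elements of $\Lambda$, upgrade the coefficients to nonnegative rationals, clear denominators to land $-Mv$ in $\Lambda$, and then add $(M-1)v$.

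The one substantive difference is the rationality step. The paper first clears denominators so that $v$ and the $w_j$ lie in $\Z^n$, and then cites a polyhedral result from Villarreal's book (essentially that an integer point of a rational cone admits a rational conic representation) to replace the real coefficients by rational ones. You instead invoke the conic Carath\'eodory theorem to pass to a linearly independent generating set, after which the coefficients are forced to be rational by elementary linear algebra over $\Q$. Your route is a bit more self-contained, since Carath\'eodory plus uniqueness of coordinates avoids the external citation; the paper's route is shorter to state once one is willing to quote the polyhedral fact. Either way the argument is the same in spirit.
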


\begin{proof}
Let $0 \neq x \in \Lambda$.  We have $-x \in \R_{\geq 0}\Lambda \cap \Q^n$.  Write $-x = \sum_{i=1}^\ell r_i y_i$, with $r_i \in \R_{\geq 0}$ and $y_i \in \Lambda$.  There is a positive integer $d$ such that $dx \in \Z^n$ and $z_i := dy_i \in \Z^n$ for all $1 \leq i \leq \ell$.  Write $\cA = \{z_1, \ldots, z_\ell\}$.  Then $-dx = \sum_{i=1}^\ell r_i z_i \in \R_{\geq 0} \cA \cap \Z^n$.  Then by \cite[Corollary 1.1.27]{Vil-monbook}, $-dx \in \Q_{\geq 0}\cA \cap \Z^n$.
Thus, there exist $q_1, \ldots, q_\ell \in \Q_{\geq 0}$ with $-dx = \sum_{i=1}^\ell q_i z_i$.  The $q_i$ have a positive common denominator, say $m$, so that $q_i = b_i / m$, with $m$ a positive integer and the $b_i$ nonnegative integers.  Then \[
-mdx = \sum_{i=1}^\ell b_i z_i \in \Lambda.
\]
Since also $(md-1)x \in \Lambda$, we have $-x = -mdx + (md-1)x \in \Lambda$.
\end{proof}

\begin{thm}\label{thm:forcepositivegrading}
Let $n$ be a positive integer, let $\Lambda$ be an additive submonoid of $\Q^n$, let $A$ be an integral domain, and let $D = A[\Lambda]$ be the corresponding semigroup ring.  If $\Lambda$ is not a group, $D$ admits a nontrivial $\R_{\geq 0}$-grading. If moreover $\Lambda$ is finitely generated, then $D$ even admits a nontrivial $\N$-grading.
\end{thm}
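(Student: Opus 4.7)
The plan is to produce a nontrivial monoid homomorphism $\phi: \Lambda \to \R_{\geq 0}$, and in the finitely generated case one landing in $\N$. Any such $\phi$ coarsens the tautological $\Lambda$-grading on $D = A[\Lambda]$ into the desired one by setting $D_r := \bigoplus_{\lambda \in \phi^{-1}(r)} A\cdot \mathbf{x}^\lambda$ for $r$ in the target monoid; this grading is nontrivial precisely because $\phi$ is nontrivial.

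To build $\phi$ in general, let $V := \R\Lambda \subseteq \R^n$ and $C := \R_{\geq 0}\Lambda \subseteq V$. Since $\Lambda$ is not a group, Lemma~\ref{lem:groupcone} gives that $C$ is not a group either, and hence is not a linear subspace. Since $C$ spans $V$ by construction, this forces $C \subsetneq V$. Being a convex subset of $V$ whose linear span is $V$, $C$ has nonempty interior in $V$; combined with $C \neq V$, $C$ possesses a boundary point, through which the supporting hyperplane theorem produces a nonzero linear functional $\ell: V \to \R$ with $\ell|_C \geq 0$. Set $\phi := \ell|_\Lambda$. Then $\phi$ is a monoid homomorphism into $\R_{\geq 0}$, and it is nonzero because $\ell$ cannot vanish on $\Lambda$ without vanishing on all of $V = \R\Lambda$.

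For the finitely generated case, after multiplying by a common denominator of the generators, I may assume $\Lambda \subseteq \Z^n$ (replacing $\Lambda$ with the isomorphic monoid $N\Lambda$ does not affect the isomorphism class of $A[\Lambda]$). Then $C$ is a rational polyhedral cone, so its dual cone $C^\vee = \{\ell \in V^* : \ell|_C \geq 0\}$ is likewise rational polyhedral by the Farkas--Minkowski--Weyl theorem. Since the previous paragraph shows $C^\vee \neq \{0\}$, I can pick a nonzero $\ell \in C^\vee$ with rational coefficients, and, after further scaling, integer coefficients, producing $\phi: \Lambda \to \N$.

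I expect the main point of care to be the separation step: confirming that the convex cone $C$ has nonempty interior in its span so that the supporting hyperplane theorem applies, and, in the finitely generated case, verifying the rational polyhedral form of the dual cone to ensure $\ell$ can be chosen integer-valued. Both are standard facts, available from \cite[Ch.~1]{Vil-monbook}.
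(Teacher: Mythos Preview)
Your proof is correct and follows essentially the same strategy as the paper's: invoke Lemma~\ref{lem:groupcone} to see that $C=\R_{\geq 0}\Lambda$ is not all of $V$, apply the supporting hyperplane theorem to extract a nonnegative linear functional on $C$, and in the finitely generated case use Farkas--Minkowski--Weyl duality (the paper cites the equivalent \cite[Theorem 1.1.29 and Proposition 1.1.51]{Vil-monbook}) to make that functional integral. One point to tighten: to get $\ell|_C\geq 0$ straight from the supporting hyperplane theorem you must take the boundary point to be the origin---which the paper does explicitly and which is indeed a boundary point, since $0\in C$ while an interior $0$ would force $C=V$ by scaling.
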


\begin{proof}
Let $d = \dim \R\Lambda$.  By Lemma~\ref{lem:groupcone}, $C :=\mathbb R_{\geq 0} \Lambda$ is not a group.  Thus, there is some $f\in \R_{\geq 0} \Lambda$ such that $-f \notin \R_{\geq 0} \Lambda$.  We have $f = \beta g$ for some $\beta \in \R_{\geq0}$ and $g\in \Lambda$.  Then  $g\in \Lambda$, but $-g \notin \R_{\geq 0} \Lambda$. 
 Choose a linear automorphism $T: \R^n \ra \R^n$ such that $T(\R\Lambda) = \R^d \times 0^{n-d}$.  Let $\pi: \R^n \onto \R^d$ be the projection onto the first $d$ coordinates.  Set $B := (\pi \circ T)(C)$.  Note that $\pi \circ T$ induces a bijection between $B$ and $C$.  Note also that  $B$ is a convex set, as it is closed under sums and positive scaling.  Moreover, $h:= \pi(T(g))$ is a nonzero element of $B$, and $-h = \pi(T(-g)) \notin B$.  It follows that the origin is a boundary point of $B$, as any ball in $\R^d$ that contains the origin contains  positive multiples of $h$ (which are in $B$) and of $-h$ (which are not in $B$).
 
 Then by the Supporting Hyperplane Theorem \cite[2.5.2]{BoVa-convexbook}, there is some ${\mathbf 0} \neq v\in \R^d$ such that $v \cdot b \leq v \cdot {\mathbf 0} = 0$ for all $b \in B$, where we are taking ordinary dot product in $\R^d$.  Since $v \cdot v > 0$, it follows that $v \notin B$.  If we also had $-v \notin B$, then for any nonzero scalar $c$, we would have $cv \notin B$, contradicting the fact that $\R B = \R^d$.  Thus, $-v \in B$.

 Now define a function $\delta: C \ra \R$ by $\delta(w) = \pi(T(w)) \cdot (-v)$.  By construction, the image of $\delta$ lies in $\R_{\geq 0}$.  Now, let $v' := (v,0,\ldots, 0) \in \R^n$, and let $\mathbf e = T^{-1}(-v')$.  Then $\mathbf e \in C$ and $\delta(\mathbf e) = \pi(T(\mathbf e)) \cdot (-v) = (-v) \cdot (-v) = v\cdot v>0$.  Choose $\alpha \in \R_{> 0}$ such that $\alpha \mathbf e \in \Lambda$; then $\delta(\alpha \mathbf e) = \alpha\delta(\mathbf e) >0$.  For each nonnegative scalar $c$, let $D_c$ be the direct sum of all the free rank 1 $A$-modules of the form $Am$, where $m = \bfx^{\mathbf a} = x_1^{a_1} x_2^{a_2} \cdots x_n^{a_n}$, $(a_1, \ldots, a_n) \in \Lambda$, and $\delta(a_1, \ldots, a_n) = c$.  Then $D_c$ is itself a free $A$-module, and as $A$-modules we have $D = \bigoplus_{c\in \R_{\geq 0}} D_c$. Moreover, if $m$ is a monomial in $D_c$ and $m'$ is a monomial in $D_{c'}$, say $m = \bfx^{\mathbf a}$ and $m' = \bfx^{\mathbf a'}$, then $\delta(\mathbf{a+a'}) = \pi(T(\mathbf{a+a'})) \cdot (-v) = \pi(T(\mathbf a))\cdot (-v) + \pi(T({\mathbf{a'}})) \cdot (-v) = \delta(\mathbf a) + \delta(\mathbf a')=c+c'$, so that $mm' = \bfx^{\mathbf a} \bfx^{\mathbf a'} = \bfx^{\mathbf{a+a'}} \in D_{c+c'}$.  It follows that $D_c D_{c'} = D_{c+c'}$. Thus, $D$ is an $\R_{\geq 0}$-graded domain that contains a homogeneous element $x^{\alpha \mathbf e}$ of positive degree.

Finally, suppose $\Lambda$ is finitely generated, say by $u_1, \ldots, u_t$. Let $d = \dim_\R \R\Lambda = \dim_\Q \Q\Lambda$.  Choose a $\Q$-linear automorphism $S: \Q^n \ra \Q^n$ that induces a $\Q$-isomorphism from $\Q \Lambda$ onto $\Q^d \times 0^{n-d}$.  By scaling, we may replace $S$ with a map $T$ that has the same properties but also the added property that all the $u_i$ are sent to elements of $\Z^d \times 0^{n-d}$. Let $\pi: \Q^n \onto \Q^d$ be the projection onto the first $d$ coordinates.  Then $\pi(T(\Lambda))$ is a finitely generated submonoid of $\Z^d$, such that $\Q_{\geq 0}\pi(T(\Lambda))$ is full-dimensional but does not fill the space.  By \cite[Theorem 1.1.29 and Proposition 1.1.51]{Vil-monbook}, there is some $a\in \Z^d$ such that $x \cdot a \geq 0$ for all $x\in \pi(T(\Lambda))$, and such that there exists some $v\in \pi(T(\Lambda))$ with $v \cdot a >0$.  Define $\delta: \Lambda \rightarrow \N_0$ by $\delta(w) = \pi(T(w)) \cdot a$.  Then by the same argument as above, $\delta$ induces a nontrivial $\N$-grading on $D$, via transforming the exponent vectors of monomials.
\end{proof}

As a corollary: 

\begin{thm}\label{thm:notagroupring}
Let $n$ be a positive integer, let $\Lambda$ be an additive submonoid of $\Q^n$, let $A$ be an integral domain, and let $D = A[\Lambda]$ be the corresponding semigroup ring.  If $\Lambda$ is not a group, then $D$ is not Egyptian.
\end{thm}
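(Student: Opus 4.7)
The plan is to combine the two preceding results, Theorem~\ref{thm:forcepositivegrading} and Proposition~\ref{pr:nonneg-graded}, which were established precisely so that this corollary falls out immediately. The key observation is that $\R$, under addition, is a totally ordered abelian group whose monoid of nonnegative elements is exactly $\R_{\geq 0}$, so the hypothesis of Proposition~\ref{pr:nonneg-graded} is fulfilled by any nontrivially $\R_{\geq 0}$-graded domain.

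Concretely, the argument proceeds in two quick steps. First, since $\Lambda$ is not a group, Theorem~\ref{thm:forcepositivegrading} supplies a nontrivial $\R_{\geq 0}$-grading on $D = A[\Lambda]$. Second, taking $G = \R$ and $\Gamma = \R_{\geq 0}$ in Proposition~\ref{pr:nonneg-graded} yields at once that a nontrivially $\R_{\geq 0}$-graded domain cannot be Egyptian. Chaining these two implications gives the conclusion for $D$.

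There is essentially no obstacle: the theorem is advertised as a corollary, and the only point to verify is the trivial matching of hypotheses between the two cited results. It is worth noting that one need not pass through the stronger \emph{finitely generated} conclusion of Theorem~\ref{thm:forcepositivegrading} (where the grading can be upgraded to an $\N$-grading); the general $\R_{\geq 0}$-grading is already enough, which is why Proposition~\ref{pr:nonneg-graded} was stated for an arbitrary totally ordered abelian group $G$ rather than only for $G = \Z$.
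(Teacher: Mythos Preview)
Your proposal is correct and follows exactly the paper's own approach: the paper's proof reads in full ``Combine Proposition~\ref{pr:nonneg-graded} and Theorem~\ref{thm:forcepositivegrading},'' and your write-up simply spells out the matching of hypotheses (taking $G=\R$, $\Gamma=\R_{\geq 0}$) that makes this combination work.
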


\begin{proof}
Combine Proposition~\ref{pr:nonneg-graded} and Theorem~\ref{thm:forcepositivegrading}.
\end{proof}

\begin{rem}
The above is a wide-ranging generalization of Proposition~\ref{pr:polynot}.  It includes the previous result by letting $\Lambda = \N$ and $D=A[X]$.  But it also shows that a wide variety of interesting rings fail to be Egyptian, even rings like $A[X^2, X^3]$ and $A[X^3, X^2 Y, XY^2, Y^3]$, $A[x/y, y/z^3]$, etc.

Note also that taken along with Theorem~\ref{thm:prevEgyptian}(\ref{it:Groupring}), we have a complete classification of which affine semigroup rings (with Egyptian domain base ring) are Egyptian.  It's precisely the ones that are group rings.
\end{rem}

\section{Locally and generically Egyptian domains}
In order to handle the fact that not every integral domain is Egyptian, it makes sense to consider some weakenings of the notion.  The notion of \emph{locally Egyptian} is like finding a covering by Egyptian open patches, and \emph{generically Egyptian} is like finding a dense open subset that is Egyptian.  Among other things, it is proved here that local and generic Egyptianness are well behaved when extending to finitely generated algebras, provided there is no kernel in the structure map.  This stands in contrast to the Egyptian property itself.

\noindent \textbf{Notation:} For a domain $D$ and an element $0 \neq d\in D$, we will use $D[1/d]$ and $D_d$ interchangeably.
\begin{defn}
Let $D$ be an integral domain.  We say that $D$ is \emph{locally Egyptian} if there is some finite system $f_1, \ldots, f_n$ of generators of the unit ideal of $D$ such that $D[1/f_j]$ is Egyptian for $1 \leq j \leq n$.
\end{defn}

\begin{rem}
It might at first seem more natural to define ``locally Egyptian'' to mean that $D_\m$ is Egyptian for all maximal ideals $\m$.  But this is automatically true for \emph{all} domains (by Theorem~\ref{thm:prevEgyptian}(\ref{it:Jac})), and hence useless.  The definition here has the sense of ``can cover with open affine subsets of the right type''.  One could specify by calling the property ``\emph{affine}-locally" (rather than stalk-locally) Egyptian.

Note also that the definition of ``locally Egyptian'' would be the same if we just called for any system of generators for the unit ideal (rather than restricting to a finite one), as any system of generators of the unit ideal contains a finite subsystem.
\end{rem}

\begin{example}[Oman]\label{ex:FXlocally}
If $F$ is a field and $D=F[X]$, then even though $D$ is not Egyptian (see Proposition~\ref{pr:polynot}), it \emph{is} locally Egyptian, as both $D[1/X]$ and $D[1/(1-X)]$ are Egyptian by Theorem~\ref{thm:prevEgyptian}(\ref{it:overpoly}), and the pair $\{X, 1-X\}$ clearly generate the unit ideal of $D$.
\end{example}

For the next Theorem, we provide a more general definition:

\begin{defn}
An integral domain $D$ is \emph{generically Egyptian} if there is some nonzero $f\in D$ such that $D[1/f]$ is Egyptian.
\end{defn}

\begin{rem}
 The following implications hold: \[
\text{Egyptian} \implies \text{locally Egyptian} \implies \text{generically Egyptian}
\]
\end{rem}

Like the Egyptian property itself (see Theorem~\ref{thm:prevEgyptian}(\ref{it:overring})), both ``locally Egyptian'' and ``generically Egyptian'' pass to overrings.

\begin{lemma}\label{lem:overring}
Let $R$ be an integral domain and $S$ an overring.  If $R$ is generically (resp. locally) Egyptian, then so is $S$.
\end{lemma}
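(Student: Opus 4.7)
The plan is to reduce both cases to Theorem~\ref{thm:prevEgyptian}(\ref{it:overring}) (that overrings of Egyptian domains are Egyptian) by choosing the same localizing elements in $S$ as in $R$ and observing that $S[1/f]$ is an overring of $R[1/f]$ inside the common fraction field.

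For the generic case, I would start with a nonzero $f \in R$ such that $R[1/f]$ is Egyptian. Since $R \subseteq S$ sit inside the same fraction field $K$, we have $R[1/f] \subseteq S[1/f] \subseteq K$, so $S[1/f]$ is an overring of $R[1/f]$. By Theorem~\ref{thm:prevEgyptian}(\ref{it:overring}), $S[1/f]$ is Egyptian, and since $f$ is a nonzero element of $S$ as well, this witnesses that $S$ is generically Egyptian.

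For the local case, take $f_1, \dotsc, f_n \in R$ generating the unit ideal of $R$ with each $R[1/f_j]$ Egyptian. Write $1 = \sum_{j=1}^n a_j f_j$ with $a_j \in R$; the same expression in $S$ shows that $f_1, \dotsc, f_n$ generate the unit ideal of $S$. As above, each $S[1/f_j]$ is an overring of $R[1/f_j]$, hence Egyptian by Theorem~\ref{thm:prevEgyptian}(\ref{it:overring}). So $S$ is locally Egyptian with the same covering system.

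There is essentially no obstacle here; the only thing one needs to be mindful of is that a generator $f_j$ of the unit ideal of $R$ remains a generator (together with the other $f_i$'s) in $S$, which is immediate from the containment $R \subseteq S$. Everything else follows directly from the previously cited overring result.
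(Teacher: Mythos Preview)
Your proof is correct and follows essentially the same approach as the paper: pick the witnessing element(s) in $R$, observe that $S[1/f]$ (resp.\ each $S[1/f_j]$) is an overring of $R[1/f]$ (resp.\ $R[1/f_j]$), and apply Theorem~\ref{thm:prevEgyptian}(\ref{it:overring}). Your added remark that the equation $1=\sum a_j f_j$ persists in $S$ is exactly how the paper (implicitly) justifies that the $f_j$ generate the unit ideal of $S$.
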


\begin{proof}
Suppose $R$ is generically Egyptian.  Let $0\neq r \in R$ such that $R_r$ is Egyptian.  Then since $S_r$ is an overring of $R_r$, it follows from Theorem~\ref{thm:prevEgyptian}(\ref{it:overring}) that $S_r$ is Egyptian.  Hence $S$ is generically Egyptian.

Suppose $R$ is locally Egyptian.  Let $f_1, \ldots, f_n$ be nonzero elements of $R$ that generate the unit ideal of $R$ such that each $R_{f_i}$ is Egyptian.  Then the $f_i$ also generate the unit ideal of $S$, and since $S_{f_i}$ is an overring of $R_{f_i}$ for each $1 \leq i \leq n$, we again have by Theorem~\ref{thm:prevEgyptian}(\ref{it:overring}) that $S_{f_i}$ is Egyptian.  Hence $S$ is locally Egyptian.
\end{proof}

\begin{example}
[Guerrieri]\label{ex:infinitepoly} Not every integral domain is generically Egyptian.  For example, let $D = k[X_1, X_2, X_3, \ldots]$ be a polynomial ring in countably many variables over a field $k$.  Then for any $0 \neq f \in D$, there is some $n$ such that $f \in k[X_1,\ldots, X_n]$.  Hence, $D_f = k[X_1, \ldots, X_n]_f[X_{n+1}, X_{n+2}, \ldots] = (k[X_1, \ldots, X_n, X_{n+2}, \ldots]_f)[X_{n+1}]$ is a polynomial ring over an integral domain, and hence not Egyptian due to Proposition~\ref{pr:polynot}.  Since $D$ is not generically Egyptian, it is not locally Egyptian either.
\end{example}

\begin{thm}\label{thm:fgalgebra}
Let $D$ be a generically (resp. locally) Egyptian domain, with fraction field $K$.  Let $L$ be an extension field of $K$, and let $R$ be a ring with $D \subseteq R \subseteq L$ such that $R$ is finitely generated as a $D$-algebra.  Then $R$ is generically (resp. locally) Egyptian.
\end{thm}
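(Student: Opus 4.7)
The plan is to induct on the number $m$ of $D$-algebra generators of $R$. Write $R = D[\alpha_1, \ldots, \alpha_m]$ and put $R' = D[\alpha_1, \ldots, \alpha_{m-1}]$, $\alpha = \alpha_m \in L$; by induction $R'$ is (generically, resp.\ locally) Egyptian, and the base case $m=0$ is immediate. Split into two subcases according to whether $\alpha$ is transcendental or algebraic over $\Frac(R')$.

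In the transcendental case, the $R'$-algebra map $R'[X] \to R$ sending $X \mapsto \alpha$ is an isomorphism. For the generic variant, pick $r' \in R'$ with $R'[1/r']$ Egyptian; then Proposition~\ref{pr:polyloc}, applied with base domain $R'[1/r']$ and multiplicative set generated by $\alpha$ (noting $\alpha \notin R'[1/r']$), shows that $R[1/(r'\alpha)] = R'[1/r'][\alpha, 1/\alpha]$ is Egyptian. For the local variant, take a finite system $g_1, \ldots, g_l$ of generators of the unit ideal of $R'$ with each $R'[1/g_j]$ Egyptian, and consider the enlarged system $\{g_j\alpha,\, g_j(1-\alpha) : 1 \leq j \leq l\}$ in $R$. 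These elements span the unit ideal since $g_j\alpha + g_j(1-\alpha) = g_j$, and each corresponding localization is Egyptian by Proposition~\ref{pr:polyloc}.

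In the algebraic case, pick a nonzero polynomial $p(X) = bX^n + b_{n-1}X^{n-1} + \cdots + b_0 \in R'[X]$ with $p(\alpha) = 0$ and $b \neq 0$. Multiplying the relation $p(\alpha)=0$ by $b^{n-1}$ shows that $b\alpha$ satisfies a monic polynomial over $R'$, so $R_0 := R'[b\alpha]$ is a finite integral extension of $R'$. The property of being (generically, resp.\ locally) Egyptian transfers from $R'$ to $R_0$ via Theorem~\ref{thm:prevEgyptian}(\ref{it:alg}): each Egyptian localization $R'[1/s]$ supplied by the generic (single $r'$) or local (system $\{g_j\}$) hypothesis for $R'$ extends to the integral overring $R_0[1/s]$, which is therefore Egyptian as an algebraic extension of an Egyptian domain. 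Finally, $\alpha = (b\alpha)/b$ with $b, b\alpha \in R_0$ and $b \neq 0$, so $\alpha \in \Frac(R_0)$ and $R = R'[\alpha] = R_0[\alpha]$ is an overring of $R_0$; Lemma~\ref{lem:overring} then delivers the conclusion.

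The chief obstacle is the algebraic subcase in the locally Egyptian setting: a naive attempt to cover $\Spec R$ using only the localization at the leading coefficient $b$ fails because $b$ need not generate the unit ideal of $R$, leaving $V(b)$ uncovered. The resolution is to interpose the integral extension $R_0 = R'[b\alpha]$, which inherits local Egyptianness from $R'$ in one stroke by the algebraic-over-Egyptian theorem, and then view $R$ as an overring of $R_0$ inside $\Frac(R_0)$ so that Lemma~\ref{lem:overring} applies without the need to construct an explicit covering of $R$.
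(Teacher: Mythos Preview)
Your proof is correct, but it follows a genuinely different route from the paper's. The paper does \emph{not} induct on the number of generators or split into transcendental/algebraic cases. Instead it first proves a single key lemma: if $D$ is Egyptian and $R = D[u_1,\ldots,u_t]$ with $u = \prod u_i$, then $R_u$ is Egyptian (by expanding any element of $R_u$ as a $D$-linear combination of monomials in the $u_i$, all of which are units in $R_u$). From this the generically Egyptian case is immediate. For the locally Egyptian case the paper uses the combinatorial identity $\sum_{S\subseteq [t]} u_S = 1$, where $u_S = \prod_{i\notin S} u_i \cdot \prod_{i\in S}(1-u_i)$, to produce in one stroke a covering $\{u_S d_i\}$ of $\Spec R$ with each $R_{u_S d_i}$ Egyptian.

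Your inductive argument trades this combinatorial identity for structural information: in the transcendental step you invoke Proposition~\ref{pr:polyloc} directly (your pairs $\{g_j\alpha, g_j(1-\alpha)\}$ are exactly the one-variable instance of the paper's $u_S d_i$ cover), while in the algebraic step you avoid building any explicit cover by passing through the integral subextension $R_0 = R'[b\alpha]$, using Theorem~\ref{thm:prevEgyptian}(\ref{it:alg}) to transport Egyptianness to its localizations, and then Lemma~\ref{lem:overring} to reach the overring $R$. This is a nice maneuver that the paper does not use. The paper's approach is shorter and more uniform; yours leans more heavily on the already-established toolkit and makes the role of Proposition~\ref{pr:polyloc} and the algebraic/integral machinery more visible.
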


For this, we start with the following lemmas.

\begin{lemma}\label{lem:fg}
Let $D$, $R$ be as in Theorem~\ref{thm:fgalgebra} and assume $D$ is Egyptian.  Let $u_1, \ldots, u_t$ be a generating set of $R$ as a $D$-algebra, and let $u = \prod_{i=1}^tu_i$.  Then $R_u$ is Egyptian.
\end{lemma}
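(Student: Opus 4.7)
The plan is to reduce everything to Theorem~\ref{thm:prevEgyptian}(\ref{it:nondistinct}): it suffices to exhibit a generating set of $R_u$ (as a ring) all of whose elements are expressible as finite sums of unit fractions in $R_u$. A natural generating set is $(D \setminus \{0\}) \cup \{u_1, \ldots, u_t\} \cup \{1/u\}$, since $R_u$ is generated as a $D$-algebra by the $u_i$'s together with $1/u$. I would handle the three types of generators separately.

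First, any nonzero $d \in D$ is Egyptian in $D$ by hypothesis, and hence Egyptian in the overring $R_u \subseteq L$ by Theorem~\ref{thm:prevEgyptian}(\ref{it:overring}). (Alternatively, the same Egyptian expression of $d$ in $D$ still makes sense in $R_u$ since $D \subseteq R_u$.) Second, the element $1/u$ is trivially a unit fraction in $R_u$, since $u$ is a nonzero element of $R \subseteq R_u$.

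The only substantive step is showing that each generator $u_i$ is Egyptian in $R_u$. Here the key observation is that $u_i$ is inverted in $R_u$: writing $v_i := \prod_{j \neq i} u_j \in R$, we have $u_i v_i = u$, so
\[
\frac{1}{u_i} = \frac{v_i}{u} \in R_u.
\]
Since $R_u$ is a domain (being a localization of a subring of the field $L$), $v_i/u$ is a nonzero element of $R_u$, and therefore
\[
u_i = \frac{1}{v_i/u}
\]
is a unit fraction in $R_u$. Note that this implicitly uses that all $u_i$ are nonzero, which is forced by $u \neq 0$ (otherwise we cannot localize at $u$ in the first place).

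I do not anticipate any real obstacle; the whole argument hinges on the (slightly cute) point that inserting the single inversion $1/u$ simultaneously inverts each generator $u_i$, reducing them to trivial unit fractions. The structure parallels the proof of Proposition~\ref{pr:polyloc} rather closely, just replacing the single variable $X$ with the finite set of algebra generators $u_1, \ldots, u_t$.
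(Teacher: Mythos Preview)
Your proof is correct and rests on the same key observation as the paper's: each $u_i$ becomes a unit in $R_u$, so once $D$ is Egyptian the whole localization is. The only organizational difference is that you reduce to a generating set via Theorem~\ref{thm:prevEgyptian}(\ref{it:nondistinct}) (exactly as in the proof of Proposition~\ref{pr:polyloc}), whereas the paper writes an arbitrary $\alpha = r/u^t \in R_u$, expands $r = \sum d_j m_j$ with $d_j \in D$ and $m_j$ monomials in the $u_i$, and observes $\alpha = \sum d_j /(u^t m_j^{-1})$; this is the same idea unwound element-by-element.
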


\begin{proof}
First note that $R_u = R[1/u_1,\ldots, 1/u_t]$.  Let $0 \neq \alpha \in R_u$.  Then $\alpha = r/u^t$ for some $0 \neq r \in R$ and $t\in \N_0$.  Then there are monomials $m_j$ in the $u_i$ and $d_j \in D$ such that $r = d_1m_1 + \cdots + d_n m_n$.  But each $m_j$ is invertible in $R_u$.  Thus, $\alpha = \frac r {u^t} = \sum_{j=1}^n \frac {d_j}{u^tm_j^{-1}}$.  Since each $d_j$ is Egyptian in $D$ (and hence in $R$ and $R_u$), the result follows.
\end{proof}

\begin{proof}[Proof of Theorem~\ref{thm:fgalgebra}]
Let $u_1, \ldots, u_t$ be a generating set of $R$ as a $D$-algebra, and let $u = \prod_{i=1}^tu_i$. Suppose that $D$ is generically Egyptian. Let $d \in D$ such that $D[1/d]$ is Egyptian.  Then by Lemma~\ref{lem:fg}, since the $u_i$ also generate $R_d$ over $D_d$, we have that $(R_d)_u = R_{du}$ is Egyptian.

Now suppose $D$ is locally Egyptian.  Choose $d_1, \ldots, d_k$ that generate the unit ideal of $D$ such that $D[d_i^{-1}]$ is Egyptian for each $1\leq i \leq k$.  For each subset $S \subseteq [t] := \{i\in \N \mid 1 \leq i \leq t\}$, set $C_S := \{u_i \mid i \in [t] \setminus S\} \cup \{1-u_i \mid i \in S\}$, and $u_S := \prod\{v \mid v\in C_S\}$.  Then for each $S \subseteq [t]$, we have $D[C_S] = R$.  Thus by Lemma~\ref{lem:fg}, for each pair $(i,S)$, we have that $(R_{d_i})_{u_S} = R_{u_S d_i}$ is Egyptian. Moreover, $\sum_{S \subseteq [t]} u_S = 1$ (as one sees from the expansion of $\prod_{i=1}^t (u_i + v_i)$ as a sum of monomials and specializing $v_i \mapsto 1-u_i$), and there exist $a_i \in D$ such that $\sum_{i=1}^k a_i d_i =1$.  Thus, \[
\sum_{i=1}^k \sum_{S \subseteq [t]} a_i u_S d_i = \sum_{i=1}^k a_i \left(\sum_{S \subseteq [t]} u_S \right) d_i = \sum_{i=1}^k a_i \cdot 1 \cdot d_i = \sum_{i=1}^k a_i d_i =1.
\]
Therefore, the set $\{u_S d_i \mid 1\leq i \leq k,$ $S \subseteq [t]\}$ generates the unit ideal of $R$.  Since each $R_{u_S d_i}$ is Egyptian, it follows that $R$ is locally Egyptian.
\end{proof}

\begin{rem}
There is a stark contrast between Theorem~\ref{thm:notagroupring} and Theorem~\ref{thm:fgalgebra}.  Combining these theorems, we see that an affine semigroup ring over a field or $\Z$ is always \emph{locally} Egyptian, but only \emph{Egyptian} in the case one already knows from \cite{GLO-Egypt}, wherein the semigroup in question is a group.
\end{rem}

\begin{cor}
Let $A$ be either a field or a domain that is an integral extension of
$\Z$ (e.g. the ring of integers of an algebraic number field).  Then any integral domain $R$ that is essentially of finite type over $A$ is locally Egyptian.
\end{cor}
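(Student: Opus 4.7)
The plan is to reduce to the already-proved Theorem~\ref{thm:fgalgebra} by first recognizing that the base ring $A$ is Egyptian, and then realizing $R$ as an overring of a finitely generated $A$-subalgebra so that Lemma~\ref{lem:overring} finishes the job.

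First I would verify that $A$ itself is Egyptian. If $A$ is a field, this is Theorem~\ref{thm:prevEgyptian}(\ref{it:field}). If $A$ is a domain integral over $\Z$, then since $\Z$ is Egyptian by Theorem~\ref{thm:prevEgyptian}(\ref{it:Z}), the algebraic extensions item Theorem~\ref{thm:prevEgyptian}(\ref{it:alg}) gives that $A$ is Egyptian as well. In particular $A$ is trivially locally Egyptian.

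Next I would unwind ``essentially of finite type.'' By definition, $R$ is a localization of a finitely generated $A$-algebra, so I can write
\[
R = A[r_1, \ldots, r_n, s_1^{-1}, \ldots, s_m^{-1}]
\]
with $r_i, s_j \in R$ and each $s_j \neq 0$. Set $T := A[r_1, \ldots, r_n, s_1, \ldots, s_m] \subseteq R$. Then $T$ is a domain (being a subring of the domain $R$), it is finitely generated as an $A$-algebra, and $R = T[s_1^{-1}, \ldots, s_m^{-1}]$ is contained in the common fraction field $L := \Frac(T) = \Frac(R)$, which is an extension field of $K := \Frac(A)$. Thus $A \subseteq T \subseteq L$ with $T$ finitely generated over $A$, so Theorem~\ref{thm:fgalgebra} applies (using that $A$ is locally Egyptian) to conclude that $T$ is locally Egyptian.

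Finally, because $R$ is a localization of $T$ inside $\Frac(T)$, it is an overring of $T$ in the sense used in Lemma~\ref{lem:overring}, and that lemma transports locally Egyptian from $T$ to $R$. The whole argument is really a straightforward chain of citations; the only place that calls for a touch of care is the mild bookkeeping in choosing the subalgebra $T$, so as to ensure $T$ is a domain with the same fraction field as $R$ and so that Theorem~\ref{thm:fgalgebra} and Lemma~\ref{lem:overring} each apply cleanly.
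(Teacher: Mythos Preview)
Your argument tacitly assumes that the structure map $A \to R$ is injective when you write $A \subseteq T \subseteq L$ and invoke Theorem~\ref{thm:fgalgebra}. That theorem requires $D \subseteq R$ as a genuine containment inside a field $L$ extending $\Frac(D)$. But when $A$ is an integral extension of $\Z$ that is not a field, the map $A \to R$ can have nonzero kernel: take for instance $A = \Z$ and $R = \F_p[t]$, which is a finitely generated $\Z$-algebra domain. In that situation there is no embedding $A \hookrightarrow T$, $\Frac(T)$ is not an extension of $\Frac(A)$, and Theorem~\ref{thm:fgalgebra} simply does not apply with $D = A$. The paper's proof handles exactly this point: since $\dim A = 1$ (Kaplansky), the kernel of $A \to R$ is either zero or maximal, and in the latter case one replaces $A$ by the residue field $A/\m$ before appealing to Theorem~\ref{thm:fgalgebra}.

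A smaller issue: your unwinding of ``essentially of finite type'' as $R = A[r_1,\ldots,r_n,s_1^{-1},\ldots,s_m^{-1}]$ with finitely many $s_j$ is not quite right, since a localization need not be obtained by inverting finitely many elements (e.g.\ $\Q$ over $\Z$). The fix is painless---just let $T$ be the finitely generated $A$-algebra of which $R$ is a localization, and observe that $R$ is an overring of (the image of) $T$---but you should state it that way. Once you also add the kernel case-split, your approach and the paper's coincide.
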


\begin{proof}
Recall that a ring $S$ is \emph{essentially of finite type} over a ring $B$ if it is a localization of a finitely generated $B$-algebra.  But since any localization of an Egyptian domain is Egyptian by Theorem~\ref{thm:prevEgyptian}(\ref{it:overring}), we may assume $R$ is a finitely generated $A$-algebra.

First suppose $A$ is a field.  Then the hypotheses of Theorem~\ref{thm:fgalgebra} are satisfied with $D=K=A$, with $D$ Egyptian by Theorem~\ref{thm:prevEgyptian}(\ref{it:field}), hence locally Egyptian.

It remains to handle the case where $A$ is an integral extension of $\Z$. Let $\mu: A \ra R$ be the structure map. By \cite[Theorem 48]{Kap-CR}, $\dim A =1$. Hence, $\ker \mu$ is either the zero ideal or a maximal ideal.  If $\ker \mu$ is a maximal ideal $\m$, then the hypotheses of Theorem~\ref{thm:fgalgebra} are satisfied with $D=A/\m$, with $D$ Egyptian, hence locally Egyptian.  If on the other hand $\ker \mu = (0)$, then by Theorem~\ref{thm:prevEgyptian}(\ref{it:Z} and \ref{it:alg}), the hypotheses of Theorem~\ref{thm:fgalgebra} are satisfied with $D=A$, with $D$ Egyptian, hence locally Egyptian.
\end{proof}

\section{Pullbacks and integrality}
In this section, we concentrate on a situation where an integral domain $S$ shares a nonzero ideal $I$ with a subring $R$ of $S$.  This is classically known as a \emph{pullback}, and includes many common constructions such as the $D+M$ construction. It is shown here that if $S$ is Egyptian (resp. generically Egyptian), then so is $R$ .

We start with a lemma that will be useful in dealing with the shared ideal.
\begin{lemma}\label{lem:egyptianideal}
Let $R$ be an Egyptian domain, and let $I$ be a nonzero ideal of $R$.  Then for any nonzero $x \in R$, there is a representation of $x$ in the form \[
x = \frac 1 {i_1} + \cdots + \frac 1 {i_n},
\]
where $i_1, \ldots, i_n$ are distinct nonzero elements of the ideal $I$.
\end{lemma}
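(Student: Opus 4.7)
The plan is to leverage the Egyptian property of $R$ after first clearing into the ideal $I$ via multiplication by a chosen nonzero element $a\in I$, then pulling it back out.

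\medskip

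First, since $I$ is a nonzero ideal of $R$, pick any nonzero element $a \in I$. Form the product $ax \in R$; this is nonzero because $R$ is a domain and both $a$ and $x$ are nonzero. Next, use the hypothesis that $R$ is Egyptian: there exist distinct nonzero $f_1, \ldots, f_n \in R$ such that
\[
ax \;=\; \frac{1}{f_1} + \cdots + \frac{1}{f_n}.
\]

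Then I would divide through by $a$ in the fraction field of $R$ to obtain
\[
x \;=\; \frac{1}{af_1} + \cdots + \frac{1}{af_n}.
\]
It remains to check three things about the denominators $i_j := af_j$: (i) each $i_j$ lies in $I$, which holds because $a\in I$ and $I$ is an ideal; (ii) each $i_j$ is nonzero, which follows since $R$ is a domain and $a, f_j$ are both nonzero; (iii) the $i_j$ are distinct, which follows from injectivity of multiplication by the nonzero element $a$ in the domain $R$: if $af_j = af_k$, then $f_j = f_k$, contradicting the distinctness of the $f_j$.

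\medskip

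There is essentially no obstacle here — the whole proof turns on the single observation that one should apply the Egyptian hypothesis to $ax$ rather than to $x$, so that the scaling factor $a \in I$ is available to push every denominator into $I$ while preserving distinctness. The only minor point worth spelling out is why distinctness survives, which is immediate from $R$ being a domain.
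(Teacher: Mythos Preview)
Your proof is correct and follows essentially the same approach as the paper: choose a nonzero $a\in I$, apply the Egyptian hypothesis to $ax$, then divide by $a$ and observe that the resulting denominators $af_j$ are distinct nonzero elements of $I$. The only cosmetic difference is notation.
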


\begin{proof}
Choose an element $0\neq i \in I$.  There are distinct elements $r_1, \ldots, r_n$ of $R$ such that \[
xi = \frac 1 {r_1} + \cdots + \frac 1 {r_n}.
\]
Dividing through by $i$, we have $x = \sum_{j=1}^n \frac 1 {ir_j}$, and all the $ir_j$ are distinct from one another since $i(r_j - r_k) \neq 0$ whenever $j \neq k$ (since $R$ is an integral domain and $i, r_j - r_k \neq 0$).  Then setting $i_j := ir_j$, we are done.
\end{proof}

\begin{thm}\label{thm:pullE}
Let $S$ be an Egyptian (resp. a generically Egyptian) domain, $R$ a subring of $S$, and $I$ a nonzero shared ideal of $R$ and $S$.  Then $R$ is Egyptian (resp. generically Egyptian).
\end{thm}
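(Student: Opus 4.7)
\emph{Plan.} I would treat the Egyptian and generically Egyptian cases separately, exploiting the shared ideal $I$ in each.

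For the Egyptian case, I would apply Lemma~\ref{lem:egyptianideal} directly. Given a nonzero $x \in R$, regard it as an element of the Egyptian domain $S$. Since $I$ is a nonzero ideal of $S$, the lemma yields a representation
\[
x = \frac{1}{i_1} + \cdots + \frac{1}{i_n}
\]
with the $i_k$ distinct nonzero elements of $I$. But $I \subseteq R$, so this display is already an Egyptian representation of $x$ in $R$, completing this case.

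For the generically Egyptian case, the key preliminary fact I would establish is that \emph{$R[1/g] = S[1/g]$ for every nonzero $g \in I$}. Indeed, $R[1/g] \subseteq S[1/g]$ is obvious, and given any $s \in S$, the element $sg$ lies in $SI \subseteq I \subseteq R$, so $s = (sg)/g \in R[1/g]$; hence $S \subseteq R[1/g]$ as well. With this in hand, I would choose $0 \neq f \in S$ such that $S[1/f]$ is Egyptian, pick any $0 \neq i \in I$, and set $g := fi$. Then $g$ is a nonzero element of $SI \subseteq I \subseteq R$, while $f$ divides $g$ in $S$, so $S[1/g]$ is an overring of the Egyptian domain $S[1/f]$, and is itself Egyptian by Theorem~\ref{thm:prevEgyptian}(\ref{it:overring}). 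By the preliminary observation, $R[1/g] = S[1/g]$, so $R[1/g]$ is Egyptian, showing $R$ is generically Egyptian.

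The only mildly tricky step is the choice of $g$ in the generic case: one needs $g$ to live in $R$ (in order to localize $R$ at it) while simultaneously ensuring $S[1/f] \subseteq S[1/g]$ (so that Egyptianness passes to $S[1/g]$). Multiplying $f$ by an element of $I$ kills both birds, and the observation that $R$ and $S$ coincide after inverting any nonzero element of their common ideal does the rest.
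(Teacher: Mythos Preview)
Your proof is correct and follows essentially the same route as the paper. The Egyptian case is identical, and in the generically Egyptian case the paper also localizes at $ix$ (your $g=fi$) and uses $R_{ix}=S_{ix}$ together with Theorem~\ref{thm:prevEgyptian}(\ref{it:overring}); you simply spell out the equality $R[1/g]=S[1/g]$ that the paper asserts without proof.
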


\begin{proof}
First suppose $S$ is Egyptian.  Let $0\neq r \in R$.  Then since $r\in S$ and $I$ is a nonzero ideal of $S$, by Lemma~\ref{lem:egyptianideal} there are nonzero $i_1, \ldots, i_n \in I$ such that \[
r = \frac{1}{i_1} + \cdots + \frac{1}{i_n}.
\]
But since the $i_j$ are in $R$, it follows that $r$ is an Egyptian element of $R$.  Thus, $R$ is an Egyptian domain.

Now suppose $S$ is generically Egyptian. Let $0\neq x \in S$ such that $S_x$ is Egyptian.  Let $0 \neq i \in I$. Then $R_{ix} = S_{ix}$ as subrings of their common fraction field, and is hence an overring of the Egyptian domain $S_x$.  By Theorem~\ref{thm:prevEgyptian}(\ref{it:overring}), it follows that $R_{ix}$ is Egyptian, and hence $R$ is generically Egyptian.
\end{proof}

\begin{cor}\label{cor:mf}
Let $R \subseteq S$ be integral domains with the same fraction field, such that $S$ is finitely generated as a module over $R$ (e.g. if $R$ is a quasi-excellent Noetherian domain and $S$ is its integral closure, or if $R$ is any integral domain and $S=R[\alpha]$, where $\alpha$ is an element of the fraction field that is integral over $R$).  Then $R$ is Egyptian (resp. generically Egyptian) if and only if $S$ is.
\end{cor}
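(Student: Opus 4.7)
The idea is to produce a nonzero ideal shared by $R$ and $S$ so that Theorem~\ref{thm:pullE} applies in one direction, while the other direction follows from the already-known fact that (generic) Egyptianness passes to overrings. The natural candidate for the shared ideal is the conductor $\mathfrak{f} := (R :_R S) = \{r \in R \mid rS \subseteq R\}$.

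My first step is to show $\mathfrak{f} \neq 0$. Choose a finite $R$-module generating set $s_1, \ldots, s_n$ for $S$, write each $s_i = a_i/b_i$ with $a_i, b_i \in R$ and $b_i \neq 0$ (possible because $\Frac(R) = \Frac(S)$), and observe that $b := b_1 \cdots b_n$ clears all the $s_i$ simultaneously, so $bS \subseteq R$ and $0 \neq b \in \mathfrak{f}$. Next I would check that $\mathfrak{f}$ is automatically an ideal of $S$ as well: for $r \in \mathfrak{f}$ and $s \in S$, $(rs)S \subseteq r S \subseteq R$, so $rs \in \mathfrak{f}$. Thus $\mathfrak{f}$ is a nonzero ideal of both $R$ and $S$, exactly the setup of Theorem~\ref{thm:pullE}.

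With this in hand, Theorem~\ref{thm:pullE} immediately gives that if $S$ is Egyptian (respectively generically Egyptian), then so is $R$. For the converse, $S$ is an overring of $R$ (same fraction field, $R \subseteq S$), so Theorem~\ref{thm:prevEgyptian}(\ref{it:overring}) handles the Egyptian case and Lemma~\ref{lem:overring} handles the generically Egyptian case. The only step with genuine content is the nonvanishing of the conductor, and that is a routine clear-denominators argument using the finite generation of $S$ as an $R$-module; no substantial obstacle is expected.
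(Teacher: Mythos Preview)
Your proposal is correct and follows essentially the same route as the paper: use the conductor $(R:_R S)$ as a nonzero common ideal to invoke Theorem~\ref{thm:pullE} for one direction, and pass to overrings via Theorem~\ref{thm:prevEgyptian}(\ref{it:overring}) and Lemma~\ref{lem:overring} for the other. The only difference is cosmetic: the paper outsources the nonvanishing and two-sidedness of the conductor to \cite[Exercise 2.11]{HuSw-book}, whereas you spell out the clear-denominators argument explicitly.
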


\begin{proof}
If $R$ is Egyptian (resp. generically Egyptian), then $S$ must be as well by Theorem~\ref{thm:prevEgyptian}(\ref{it:overring}) (resp. Lemma~\ref{lem:overring}).
For the converse, suppose $S$ is Egyptian (resp. generically Egyptian).  The conditions imply that the \emph{conductor} $(R :_R S)$ is a common nonzero ideal to $R$ and $S$ \cite[Exercise 2.11]{HuSw-book}.  Then by Theorem~\ref{thm:pullE}, $R$ is Egyptian (resp. generically Egyptian).
\end{proof}
\section{Questions}

\begin{question}
Can one find a Noetherian domain that is not generically (or locally) Egyptian?  
\end{question}

\begin{question}
In Example~\ref{ex:FXlocally}, we saw that  a domain can be locally Egyptian without being Egyptian. But can a domain be generically Egyptian without being locally Egyptian?  
\end{question}

\begin{question}\label{q:pullback}
Let $S$ be an integral domain, let $R$ be a subring of $S$, and suppose $I$ is a nonzero common ideal to $R$ and $S$.
If $S$ is locally Egyptian, then is $R$ locally Egyptian?
\end{question}

\begin{rem}\label{rem:overringpb}
Note that the converse of Question~\ref{q:pullback} has a positive answer.  This is because under these conditions, $S$ is always an overring of $R$, so if $c_1,\ldots, c_n \in R$ generate the unit ideal and each $R_{c_i}$ is Egyptian, then $S_{c_i}$ is an overring of $R_{c_i}$, so is Egyptian by Theorem~\ref{thm:prevEgyptian}(\ref{it:overring}), whence $S$ is locally Egyptian.  The same implication holds for ``Egyptian'' and ``generically Egyptian''.
\end{rem}

\begin{question}
Let $R \subseteq S$ be a domain extension, such that $S$ is integral over $R$.  We know that if $R$ is Egyptian, so is $S$ by Theorem~\ref{thm:prevEgyptian}(\ref{it:alg}), since in particular $S$ is algebraic over $R$.  (Indeed, the proof in \cite{GLO-Egypt} is really about integrality, as the key step involves a monic polynomial.)  But does the converse hold?

This question is even interesting when $R$ and $S$ share a fraction field.  A restatement would be like this: ``Let $S$ be an integrally closed Egyptian domain and let $R$ be a subring whose integral closure is $S$.  Is $R$ Egyptian?''  It's a restatement since overrings of Egyptian domains are always Egyptian by Theorem~\ref{thm:prevEgyptian}(\ref{it:overring}).  If it were true, it would ``reduce'' the study of Egyptianness to integrally closed domains.  By Corollary~\ref{cor:mf}, the latter question has a positive answer for most rings that typically come up in Noetherian and algebraic-geometric settings.
\end{question}

\section*{Acknowledgments}
Thanks to the authors of \cite{GLO-Egypt} for providing me with an advance copy of their paper.  Similarly, I am grateful for interesting e-mail discussions with Greg Oman and Lorenzo Guerrieri, following Oman's interesting talk at the AMS sectional meeting in Chattanooga.  In particular, Example~\ref{ex:FXlocally} is due to Oman, and Example~\ref{ex:infinitepoly} is due to Guerrieri.  I also found an e-mail discussion with Winfried Bruns useful, in which we determined a published source for the finitely generated version of Lemma~\ref{lem:groupcone} as an exercise in his joint book.
Finally, I would like to thank the referee for some useful corrections and clarifications that have improved the paper.

\providecommand{\bysame}{\leavevmode\hbox to3em{\hrulefill}\thinspace}
\providecommand{\MR}{\relax\ifhmode\unskip\space\fi MR }
\providecommand{\MRhref}[2]{%
  \href{http://www.ams.org/mathscinet-getitem?mr=#1}{#2}
}
\providecommand{\href}[2]{#2}

\end{document}